\def \N{{\mathbb N}}
\def \R{{\mathbb R}}
\def \1{{\mathbb 1}}
\newtheorem{Exemp}{Examples}
\newtheorem{Thm}{Theorem}
\newtheorem{Prop}{Proposition}
\newtheorem{Lem}{Lemma}
\newtheorem{Def Nota}{Definitions and notations} 
\newtheorem{Cor}{Corollary}
\font\ninerm=cmr9
\long\outer\def\abstract#1{\bigskip\vbox{\noindent\ninerm
\baselineskip=10pt#1}\nobreak\bigskip}
\def\exo#1{\advance\numero by 1\bigskip
{\noindent\tenbf #1\the\numero. }}
\def\frac#1#2{{#1\over #2}}
\title{Limited operators and differentiability.}   
\author{Mohammed Bachir}
\begin{document}
\maketitle
\begin{center} {\it Laboratoire SAMM 4543, Universit\'e Paris 1 Panth\'eon-Sorbonne, Centre P.M.F. 90 rue Tolbiac 75634 Paris cedex 13}
\end{center}
\begin{center} 
{\it Email : Mohammed.Bachir@univ-paris1.fr}
\end{center}
\noindent\textbf{Abstract.} We characterize the limited operators by differentiability of convex continuous functions. Given Banach spaces $Y$ and $X$ and a linear continuous operator $T: Y \longrightarrow X$, we prove that   $T$ is a limited operator if and only if, for every convex continuous function $f: X \longrightarrow \R$ and every point $y\in Y$, $f\circ T$ is Fr\'echet differentiable at $y\in Y$ whenever $f$ is G\^ateaux differentiable at $T(y)\in X$. 

\vskip5mm
\noindent {\bf Keyword, phrase:} Limited operators, G\^ateaux differentiability, Fr\'echet differentiability, convex functions, extreme points.\\
{\bf 2010 Mathematics Subject:} Primary 46G05, 49J50, 58C20, 46B20, secondary 47B07.

\begin{center}
 \section{Introduction.}
\end{center} 

A subset $A$ of a Banach space $X$ is called limited, if every weak$^*$ null sequence $(p_n)_n$ in $X^*$ converges uniformly on $A$, that is,$$\lim_{n\rightarrow +\infty} \sup_{x\in A}|\langle p_n , x \rangle|=0.$$
We know that every relatively compact subset of $X$ is limited, but the converse is false in general. A bounded linear operator $T : Y \longrightarrow X$ between Banach spaces $Y$ and $X$ is called limited, if $T$ takes the closed unit ball $B_Y$ of $Y$ to a limited subset of $X$. It is easy to see that $T : Y \longrightarrow X$ is limited if and only if, the adjoint operator $T^*: X^*\longrightarrow Y^*$ takes weak$^*$ null sequence to norm null sequence. For useful properties of limited sets and limited operators we refer to \cite{Sc}, \cite{BD}, \cite{CGL} and \cite{A}. 

We know that in a finite dimensional Banach space, the notions of G\^ateaux and Fr\'echet differentiability coincide for convex continuous functions. In \cite{BF}, Borwein and Fabian proved that a Banach space $Y$ is infinite dimensional if and only if, there exists on $Y$ functions $f$ having points at
which $f$ is G\^ateaux but not Fr\'echet differentiable. They also pointed in the introduction of \cite{BF} the observation that if the sup-norm $\|.\|_{\infty}$ on $c_0$ is G\^ateaux differentiable at some point, then it is Fr\'echet differentiable there. In this article we observe that this phenomenon is not just related to the sup-norm but more generally, for each convex lower semicontinuous function $g: l^{\infty} \rightarrow \R \cup \lbrace +\infty\rbrace$, if $g$ is G\^ateaux differentiable at some point $a\in c_0$ which is in the interior of its domain, then the restriction of $g$ to $c_0$ is Fr\'echet differentiable at $a$. This hold in particular when $g=(f^*)^*$ is the Fenchel biconjugate of a convex continuous function $f:c_0 \rightarrow \R$.
In fact, this phenomenon  is due, (see Corollary \ref{Za1} in the Appendix and the comment just before), to the fact that the canonical embedding $i: c_0\longrightarrow l^{\infty}$ is a limited operator (see the reference \cite{CGL}). 

\vskip5mm

The goal of this paper, is to prove the following characterization of limited operators in terms of the coincidence of G\^ateaux and Fr\'echet differentiability of convex continuous functions. 

\vskip5mm

\begin{Thm} \label{main} Let $Y$ and $X$ be two Banach spaces and $T: Y \longrightarrow X$ be a continuous linear operator. Then, $T$ is a limited operator if and only if, for every convex continuous function $f: X \longrightarrow \R$ and every $y\in Y$, the function $f\circ T$ is Fr\'echet differentiable at $y\in Y$ whenever $f$ is G\^ateaux differentiable at $T(y)\in X$.
\end{Thm}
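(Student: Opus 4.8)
The proof splits into the two implications, and in both I use the dual description recalled in the introduction: $T$ is limited exactly when $T^{*}$ maps weak$^{*}$-null sequences of $X^{*}$ to norm-null sequences of $Y^{*}$.

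\medskip
\noindent\textbf{($\Rightarrow$) $T$ limited implies the differentiability transfer.} Let $f:X\to\R$ be convex continuous, let $y\in Y$, and suppose $f$ is G\^ateaux differentiable at $x_{0}:=T(y)$, with $p_{0}:=\nabla f(x_{0})$. Writing $g:=f\circ T$, the identity $t^{-1}\bigl(g(y+tk)-g(y)\bigr)=t^{-1}\bigl(f(x_{0}+tTk)-f(x_{0})\bigr)$ shows at once that $g$ is convex continuous and G\^ateaux differentiable at $y$ with $\nabla g(y)=T^{*}p_{0}=:q$. Suppose, for contradiction, that $g$ is not Fr\'echet differentiable at $y$. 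Since the difference quotient of a convex function is nonnegative around a point of G\^ateaux differentiability, there exist $\varepsilon>0$, $t_{m}\downarrow 0$ and unit vectors $h_{m}\in Y$ with $g(y+t_{m}h_{m})-g(y)-t_{m}\langle q,h_{m}\rangle\ge\varepsilon t_{m}$. Pick $\pi_{m}\in\partial f\bigl(x_{0}+t_{m}Th_{m}\bigr)$ (nonempty, as $f$ is convex continuous); the subgradient inequality gives $f(x_{0}+t_{m}Th_{m})-f(x_{0})\le t_{m}\langle\pi_{m},Th_{m}\rangle$, and combining with the previous line yields $\langle T^{*}(\pi_{m}-p_{0}),h_{m}\rangle\ge\varepsilon$, hence $\|T^{*}(\pi_{m}-p_{0})\|\ge\varepsilon$ for all $m$. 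On the other hand $x_{0}+t_{m}Th_{m}\to x_{0}$, so by the local boundedness and the norm-to-weak$^{*}$ closed graph of $\partial f$, together with $\partial f(x_{0})=\{p_{0}\}$ (a consequence of G\^ateaux differentiability), every weak$^{*}$ cluster point of the bounded sequence $(\pi_{m})$ equals $p_{0}$; thus $\pi_{m}-p_{0}\rightharpoonup^{*}0$ in $X^{*}$. Since $T$ is limited, $\|T^{*}(\pi_{m}-p_{0})\|\to 0$, contradicting the previous estimate. Hence $g$ is Fr\'echet differentiable at $y$.

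\medskip
\noindent\textbf{($\Leftarrow$) The transfer fails when $T$ is not limited.} I argue contrapositively. If $T$ is not limited there is a sequence $(p_{n})$ in the unit ball of $X^{*}$ with $p_{n}\rightharpoonup^{*}0$ and $\|T^{*}p_{n}\|\ge\varepsilon$ for some $\varepsilon>0$ and all $n$; choose unit vectors $k_{n}\in Y$ with $\langle p_{n},Tk_{n}\rangle\ge\varepsilon/2$. Define
$$f(x):=\sup_{n\ge 1}\Bigl(\langle p_{n},x\rangle-\tfrac1n\Bigr)\qquad(x\in X).$$
Then $f$ is $1$-Lipschitz (hence convex continuous) with $f(0)=0$. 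For fixed $h\in X$ and $t>0$ one has $t^{-1}f(th)=\sup_{n}\bigl(\langle p_{n},h\rangle-\tfrac1{nt}\bigr)$, and using $\langle p_{n},h\rangle\to 0$ (weak$^{*}$-nullness) together with $\tfrac1n\to 0$ a routine two-sided estimate gives $t^{-1}f(th)\to 0$ as $t\to 0$; so $f$ is G\^ateaux differentiable at $0=T(0)$ with $\nabla f(0)=0$. Consequently $f\circ T$ is G\^ateaux differentiable at $0$ with derivative $T^{*}(0)=0$, so if it were Fr\'echet differentiable at $0$ we would have $\|u\|^{-1}f(Tu)\to 0$ as $u\to 0$. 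But for $u_{n}:=n^{-1/2}k_{n}$ we have $\|u_{n}\|=n^{-1/2}\to 0$, $u_{n}\neq 0$, and
$$\frac{f(Tu_{n})}{\|u_{n}\|}\ \ge\ \frac{n^{-1/2}\langle p_{n},Tk_{n}\rangle-\tfrac1n}{n^{-1/2}}\ \ge\ \frac{\varepsilon}{2}-n^{-1/2}\ \xrightarrow[n\to\infty]{}\ \frac{\varepsilon}{2}>0,$$
a contradiction. So $f$ is G\^ateaux differentiable at $T(0)$ while $f\circ T$ is not Fr\'echet differentiable at $0$, which is what we wanted.

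\medskip
\noindent\textbf{Where the work is.} The implication ($\Rightarrow$) is, once the right sequences are isolated, just manipulation of subgradients plus the defining property of limited operators. The substantive part is ($\Leftarrow$): producing from an \emph{arbitrary} non-limited $T$ a single \emph{scalar-valued} convex continuous function on all of $X$ that witnesses the failure. The two points to be checked carefully there are that $f$ is genuinely G\^ateaux differentiable at $0$ — this is exactly where the weak$^{*}$-nullness of $(p_{n})$, rather than mere boundedness, is used — and that composing with $T$ destroys Fr\'echet differentiability, which is detected along the scale $n^{-1/2}$ against the directions $k_{n}$ realizing $\|T^{*}p_{n}\|\ge\varepsilon$.
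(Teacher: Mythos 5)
Your proof is correct, but it takes a genuinely different route from the paper's. The paper factors everything through Fenchel duality and the Asplund--Rockafellar correspondence (G\^ateaux differentiability of $f$ at a point $\leftrightarrow$ weak$^*$-strong minimum of a tilt of $f^*$; Fr\'echet $\leftrightarrow$ norm-strong minimum): for the ``limited $\Rightarrow$ transfer'' direction it runs an approximate-minimizer argument on $f^*$ restricted to $B_{X^*}$, and for the converse it builds the witness function as the conjugate $f=(h+\delta_K)^*|_X$ of a carefully constructed seminorm on $K=\overline{co}^{w^*}\{p_n\}$, which requires the Bishop--Phelps separation lemma, Haydon's and Milman's theorems (Lemma \ref{weak}), and Proposition \ref{gmin}. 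You replace both halves by primal, elementary arguments. In the forward direction you work directly with subgradients $\pi_m\in\partial f(x_0+t_mTh_m)$: local boundedness and the norm-to-weak$^*$ closed graph of $\partial f$, plus $\partial f(x_0)=\{p_0\}$, force $\pi_m-p_0\rightharpoonup^*0$ (note the cluster-point argument uses weak$^*$ compactness of balls via nets, not subsequences, but the conclusion that a bounded sequence with unique weak$^*$ cluster point converges weak$^*$ is sound), and then limitedness kills $\|T^*(\pi_m-p_0)\|$, contradicting your lower bound $\varepsilon$. In the reverse direction you use the classical explicit function $f(x)=\sup_n(\langle p_n,x\rangle-\tfrac1n)$ (the Borwein--Fabian/Phelps-type example), checked to be G\^ateaux differentiable at $0$ precisely because $(p_n)$ is weak$^*$ null, while the directions $k_n$ realizing $\|T^*p_n\|\ge\varepsilon$ destroy Fr\'echet differentiability of $f\circ T$ at scale $n^{-1/2}$. (One small point to make explicit: a weak$^*$ null sequence is bounded by Banach--Steinhaus, so rescaling puts it in $B_{X^*}$ while keeping $\inf_n\|T^*p_n\|>0$.) What your approach buys is self-containedness and transparency --- no conjugation, no extreme-point or separability machinery; what the paper's approach buys is the stronger statement of Theorem \ref{princ} for convex lower semicontinuous extended-real-valued $f$, and the by-products (Proposition \ref{gmin}, the strong-minimum reformulations) that drive Theorem \ref{BBF} and the Appendix.
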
 
As consequence we give, in Theorem \ref{BBF} below, new characterizations of infinite dimensional Banach spaces, complementing a result of Borwein and Fabian in [\cite{BF}, Theorem 1.].

\vskip5mm 
A real valued function $f$ on a Banach space will be called a PGNF-function (see \cite{BF}) if there exists a point at
which $f$ is G\^ateaux but not Fr\'echet differentiable. A JN-sequence (due to Josefson-Nissenzweig theorem, see [\cite{Dies}, Chapter XII]) is a sequence $(p_n)_n$ in a dual space $Y^*$ that is weak$^*$ null and $\inf_n \|p_n\|>0$. We say that a function $g$ on $X^*$ has a norm-strong minimum (resp. weak$^*$-strong minimum) at $p\in X^*$ if $g(p)= \inf_{q\in X^*}g(q)$ and $(p_n)_n$ norm converges (resp. weak$^*$ converges) to $p$ whenever $g(p_n)\longrightarrow g(p)$. A norm-strong minimum and weak$^*$-strong minimum are in particular unique. 

\begin{Thm} \label{BBF} Let $Y$ be a Banach space. Then the following assertions are equivalent.

$(1)$ $Y$ is infinite dimensional.

$(2)$ There exists a JN-sequence in $Y^*$.

$(3)$ There exists a convex norm separable and weak$^*$ compact metrizable subset $K$ of $Y^*$ contaning $0$ and a continuous seminorm $h$ on $X^*$ which is weak$^*$ lower semicontinuous and weak$^*$ sequentially continuous, such that the restriction $h_{|K}$ has a weak$^*$-strong minimum but not norm-strong minimum at $0$.


$(4)$ There exists a Banach space $X$ and a linear continuous non-limited operator $T: Y\longrightarrow X$.


$(5)$ There exists on $Y$ a convex continuous PGNF-function.


\end{Thm}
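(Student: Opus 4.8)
The plan is to establish the cycle of implications $(1)\Rightarrow(2)\Rightarrow(3)\Rightarrow(4)\Rightarrow(5)\Rightarrow(1)$, using Theorem \ref{main} as the bridge between the operator-theoretic statement $(4)$ and the differentiability statement $(5)$. The equivalence $(1)\Leftrightarrow(2)$ is the Josefson--Nissenzweig theorem: in infinite dimensions a weak$^*$-null sequence on the unit sphere of $Y^*$ exists, and conversely a JN-sequence cannot exist in a finite-dimensional dual, where weak$^*$ and norm convergence agree. For $(2)\Rightarrow(3)$, starting from a JN-sequence $(p_n)_n$ I would normalize so that $\|p_n\|\to c>0$ (or just $\inf_n\|p_n\|>0$), then build $K$ as the norm-closed convex hull of $\{0\}\cup\{\lambda_n p_n : n\in\N\}$ for a suitable sequence $\lambda_n\downarrow 0$ chosen so that $K$ is norm separable and weak$^*$ compact metrizable (this uses that a bounded weak$^*$ null sequence together with $0$ is weak$^*$ compact and metrizable in the weak$^*$ topology on bounded sets when $Y$ is separable — one should first pass to a separable subspace, or more carefully argue metrizability of $K$ directly from the countable generating set). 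The seminorm $h$ can be taken as $h(q)=\sup_n |\langle q, x_n\rangle|$ for a bounded sequence $(x_n)$ in $Y$ diagonalizing against $(p_n)$, i.e. chosen with $\langle p_n, x_n\rangle$ bounded below; then $h$ is a weak$^*$ lower semicontinuous continuous seminorm, $h$ is weak$^*$ sequentially continuous on the relevant bounded set, $h(0)=0$ is the minimum on $K$, $h(p_n)\not\to 0$ shows the minimum is not norm-strong, while weak$^*$-strongness at $0$ follows because $h(q_k)\to 0$ with $q_k\in K$ forces, via the generating structure of $K$, that $q_k\to 0$ weak$^*$.

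For $(3)\Rightarrow(4)$: given such a $K$ and $h$, I would manufacture a Banach space $X$ and operator $T:Y\to X$ that fails to be limited. The natural choice is to let $X$ be the predual-type object or, more directly, observe that the failure of norm-strong minimum for $h_{|K}$ at $0$ exhibits a weak$^*$-null sequence $(p_n)\subset K$ with $\inf_n\|p_n\|>0$, hence a JN-sequence; then define $T:Y\to c_0$ (or into $\ell^\infty$, or a suitable sum space) by $T(y)=(\langle p_n, y\rangle/\|p_n\|)_n$ — the boundedness of $(p_n)$ makes $T$ continuous into $\ell^\infty$, and weak$^*$-nullness of $(p_n)$ makes the image of each fixed $y$ lie in $c_0$. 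The adjoint $T^*$ sends the coordinate functionals $e_n^*$ (a weak$^*$-null sequence in $(c_0)^*=\ell^1$... here one must be careful and instead realize the $p_n/\|p_n\|$ as images under $T^*$ of a weak$^*$-null sequence) to the $p_n/\|p_n\|$, which have norm $1$; so $T^*$ does not map weak$^*$-null to norm-null, i.e. $T$ is not limited. Alternatively, one cites Proposition-style folklore that $Y$ admitting a JN-sequence is equivalent to the identity $Y\to Y$ (or some canonical map) being non-limited — but to be safe I would construct $T$ explicitly as above.

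For $(4)\Rightarrow(5)$: apply Theorem \ref{main} in the contrapositive. Since $T:Y\to X$ is continuous linear and \emph{not} limited, the characterization fails, so there exist a convex continuous $f:X\to\R$ and a point $y\in Y$ such that $f$ is G\^ateaux differentiable at $T(y)$ but $f\circ T$ is \emph{not} Fr\'echet differentiable at $y$; since $f\circ T$ is convex and continuous on $Y$ and G\^ateaux differentiable at $y$ (as the composition of a G\^ateaux-differentiable convex function with a bounded linear map), it is a convex continuous PGNF-function on $Y$, giving $(5)$. Finally $(5)\Rightarrow(1)$ is exactly [\cite{BF}, Theorem 1.] (a PGNF-function, even merely a real-valued one, exists only in infinite dimensions; a fortiori a convex continuous one), or alternatively one notes that in finite dimensions G\^ateaux and Fr\'echet differentiability coincide for convex continuous functions, so no convex continuous PGNF-function can exist. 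The main obstacle I anticipate is the careful verification in $(2)\Rightarrow(3)$ that the constructed $K$ is simultaneously norm separable, weak$^*$ compact, and weak$^*$ metrizable while $h_{|K}$ has the asserted strong-minimum behavior — in particular pinning down the weak$^*$ metrizability of $K$ and the weak$^*$ sequential continuity of $h$ on $K$ without accidentally forcing $h$ to be weak$^*$ continuous (which would destroy the lack of norm-strong minimum).
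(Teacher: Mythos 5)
Your overall skeleton is sound and largely matches the paper's: $(1)\Leftrightarrow(2)$ is Josefson--Nissenzweig, $(4)\Rightarrow(5)$ is Theorem \ref{main} in contrapositive form exactly as you say, and $(5)\Rightarrow(1)$ (the paper instead closes the loop with $(5)\Rightarrow(2)$ via Asplund--Rockafellar duality) is unproblematic. Your explicit operator $T(y)=(\langle p_n,y\rangle/\|p_n\|)_n$ into $c_0$ for $(3)\Rightarrow(4)$ works, though the paper simply takes $X=Y$ and $T=\mathrm{Id}$, which is not limited in the presence of a JN-sequence.

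The genuine gap is in $(2)\Rightarrow(3)$, where your construction of both $K$ and $h$ fails. First, if you rescale to $\lambda_n p_n$ with $\lambda_n\downarrow 0$, the set $\{0\}\cup\{\lambda_np_n\}$ is norm compact, hence so is its closed convex hull $K$; on a norm-compact set the weak$^*$ and norm topologies coincide, so every weak$^*$-strong minimum on $K$ is automatically norm-strong and $(3)$ can never hold for such a $K$. You must keep the un-rescaled $p_n$ in $K$; the paper takes $K=\overline{co}^{w^*}\{p_n:n\in\N\}$ and gets norm separability and weak$^*$ metrizability not by rescaling but from Lemma \ref{weak} (Milman plus Haydon's theorem for separability, a Bishop--Phelps separating sequence for metrizability). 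Second, your seminorm $h(q)=\sup_n|\langle q,x_n\rangle|$ with $(x_n)$ ``diagonalized'' so that $h(p_n)$ is bounded below is self-defeating: statement $(3)$ requires $h$ to be weak$^*$ \emph{sequentially continuous}, and since $p_n\to 0$ weak$^*$ that forces $h(p_n)\to h(0)=0$, contradicting your lower bound. Moreover your stated reason for the failure of the norm-strong minimum is inverted: ``$h(p_n)\not\to 0$'' would only show $(p_n)$ is \emph{not} a minimizing sequence, which proves nothing. The correct mechanism (and the paper's) is the opposite: take $h(x^*)=\bigl(\sum_k 2^{-k}\langle x^*,x_k\rangle^2\bigr)^{1/2}$ for a separating sequence $(x_k)\subset S_X$; the uniform convergence of the series on bounded sets makes $h$ weak$^*$ sequentially continuous, so $h(p_n)\to 0=\min_Kh$, i.e.\ $(p_n)$ \emph{is} a minimizing sequence, while $\inf_n\|p_n\|>0$ shows it does not norm-converge to $0$ --- that is what ``not a norm-strong minimum'' means. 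Your closing worry that weak$^*$ (sequential) continuity of $h$ ``would destroy the lack of norm-strong minimum'' is the same misconception: sequential weak$^*$ continuity is precisely what makes the JN-sequence a minimizing sequence, and the non-norm-convergence of that sequence is what kills the norm-strong minimum.
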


\vskip5mm

In Section \ref{S2} we give some preliminary results, specially the key Lemma \ref{weak}. In Section \ref{S3}, we give the proof of Theorem \ref{main} (divided into two part, Theorem \ref{recip} and  Theorem \ref{princ}) and the proof of Theorem \ref{BBF}. In Section \ref{S4} we give some complementary remarks.
\begin{center}
\section{Preliminaries.} \label{S2}
\end{center}

We recall the following classical result.

\begin{Lem} \label{wellknow} Let $\mathcal{T}_1$ and $\mathcal{T}_2$ be topologies on a set $K$ such that

$(1)$ $K$ is Hausdorff with respect $\mathcal{T}_1$,

$(2)$ $K$ is compact with respect to $\mathcal{T}_2$,

$(3)$ $\mathcal{T}_1 \subset \mathcal{T}_2$.

Then $\mathcal{T}_1 = \mathcal{T}_2$.
\end{Lem}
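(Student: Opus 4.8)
The plan is to recognize this as the standard fact that a continuous bijection from a compact space onto a Hausdorff space is a homeomorphism, applied to the identity map. First I would consider the identity map $\mathrm{id}\colon (K,\mathcal{T}_2) \longrightarrow (K,\mathcal{T}_1)$. Since by $(3)$ we have $\mathcal{T}_1 \subset \mathcal{T}_2$, the preimage under $\mathrm{id}$ of any $\mathcal{T}_1$-open set is that same set, which belongs to $\mathcal{T}_2$; hence $\mathrm{id}$ is continuous from $\mathcal{T}_2$ to $\mathcal{T}_1$. It is trivially a bijection.

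Next I would show that $\mathrm{id}$ is a closed map. Let $F$ be a $\mathcal{T}_2$-closed subset of $K$. By $(2)$, $K$ is $\mathcal{T}_2$-compact, and a closed subset of a compact space is compact, so $F$ is $\mathcal{T}_2$-compact. The continuous image of a compact set is compact, so $\mathrm{id}(F) = F$ is $\mathcal{T}_1$-compact. By $(1)$, $K$ is $\mathcal{T}_1$-Hausdorff, and a compact subset of a Hausdorff space is closed; hence $F$ is $\mathcal{T}_1$-closed.

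Finally, I would conclude that the set-theoretic inverse $\mathrm{id}\colon (K,\mathcal{T}_1) \longrightarrow (K,\mathcal{T}_2)$ is continuous: the preimage under this map of a $\mathcal{T}_2$-closed set $F$ equals $\mathrm{id}(F)$, which was just shown to be $\mathcal{T}_1$-closed. Therefore every $\mathcal{T}_2$-closed set is $\mathcal{T}_1$-closed, equivalently every $\mathcal{T}_2$-open set is $\mathcal{T}_1$-open, i.e. $\mathcal{T}_2 \subset \mathcal{T}_1$; combined with $(3)$ this yields $\mathcal{T}_1 = \mathcal{T}_2$.

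There is essentially no obstacle here: the proof rests only on the two elementary topological facts ``a closed subset of a compact space is compact'' and ``a compact subset of a Hausdorff space is closed'', and on the way the hypotheses $(1)$, $(2)$, $(3)$ feed into them. One may phrase the argument without ever mentioning the identity map, by taking $U \in \mathcal{T}_2$ and showing directly that its complement $K \setminus U$ is $\mathcal{T}_1$-closed via the same compactness chain; this is the same proof in slightly more pedestrian language.
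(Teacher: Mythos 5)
Your proof is correct and is essentially the same argument as the paper's: both show that every $\mathcal{T}_2$-closed set is $\mathcal{T}_1$-closed via the chain ``closed in compact is compact, hence $\mathcal{T}_1$-compact, hence $\mathcal{T}_1$-closed by Hausdorffness.'' The paper simply phrases it without the identity map (unfolding the continuity-of-image step into an explicit finite-subcover argument), which is exactly the ``more pedestrian'' variant you mention at the end.
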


\begin{proof} Let $F\subset K$ be a $\mathcal{T}_2$-closed set. It follows that $F$ is $\mathcal{T}_2$-compact, since $K$ is $\mathcal{T}_2$-compact. Let $\lbrace \mathcal{O}_i: i\in I\rbrace$ be any cover of $F$ by $\mathcal{T}_1$-open sets. Since $\mathcal{T}_1 \subset \mathcal{T}_2$, then each of these sets is also $\mathcal{T}_2$-open. Hence, there exist
a finite subcollection that covers $F$. It follows that $F$ is $\mathcal{T}_1$-compact and therefore
is $\mathcal{T}_1$-closed since $\mathcal{T}_1$ is Hausdorff. This implies that $\mathcal{T}_2 \subset \mathcal{T}_1$. Consequently, $\mathcal{T}_1 = \mathcal{T}_2$.

\end{proof}

Now, we establish the following useful lemma. If $B$ is a subset of a dual Banach space $X^*$, we denote by $\overline{co}^{w^*}{(B)}$ the weak$^*$ closed convex hull of $B$.

\begin{Lem} \label{weak} Let $X$ be a Banach space and $K$ be a subset of $X^*$.

$(1)$ Suppose that $K$ is norm separable, then there exists a sequence $(x_n)_n$ in the unit sphere $S_X$ of $X$ which separate the points of $K$ i.e. for all $p, p'\in K$, if $\langle p, x_n\rangle = \langle p', x_n\rangle$ for all $n\in \N$, then $p=p'$. Consequently, if $K$ is a weak$^*$ compact and norm separable  set of $X^*$, then the weak$^*$ topology of $X^*$ restricted to $K$ is metrizable.

$(2)$ Let $(p_n)_n$ be a weak$^*$ null sequence in $X^*$. Then, the set $\overline{co}^{w^*}\lbrace p_n: n\in \N\rbrace$ is convex weak$^*$ compact and norm separable.
\end{Lem}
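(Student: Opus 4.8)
For part $(1)$, the plan is to produce the separating sequence directly from norm separability. Let $\{p_k : k \in \N\}$ be a norm-dense subset of $K$. For each pair $(j,k)$ with $p_j \neq p_k$, pick $x_{j,k} \in S_X$ with $\langle p_j - p_k, x_{j,k}\rangle \neq 0$ (possible by Hahn–Banach), and let $(x_n)_n$ be an enumeration of all these vectors. I claim this sequence separates all of $K$: if $p, p' \in K$ with $\langle p - p', x_n\rangle = 0$ for every $n$, approximate $p$ and $p'$ in norm by members of the dense set and pass to the limit, using that each $x_n$ has norm $1$, to conclude $\|p - p'\| = 0$. For the ``consequently'' part, the map $\Phi : p \mapsto (\langle p, x_n\rangle / 2^n)_n$ sends $X^*$ into $\ell^\infty$ (or into $\R^\N$ with the product topology); on the weak$^*$ compact set $K$ it is weak$^*$-to-pointwise continuous and, by the separation property, injective, hence $K$ is homeomorphic to its image when the latter carries the metrizable product topology. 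More cleanly, one applies Lemma \ref{wellknow} with $\mathcal{T}_2$ the weak$^*$ topology on $K$ and $\mathcal{T}_1$ the (coarser, Hausdorff, metrizable) topology induced by the countable family of functionals $p \mapsto \langle p, x_n\rangle$: the two topologies coincide, so the weak$^*$ topology on $K$ is metrizable.

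For part $(2)$, since $(p_n)_n$ is weak$^*$ null it is weak$^*$ bounded, hence norm bounded by the uniform boundedness principle; say $\|p_n\| \le M$ for all $n$. Then $\{p_n : n \in \N\}$ lies in the ball $M B_{X^*}$, which is weak$^*$ compact by Banach–Alaoglu, so its weak$^*$ closed convex hull $C := \overline{co}^{w^*}\{p_n : n\in\N\}$ is a weak$^*$ closed subset of $M B_{X^*}$, hence weak$^*$ compact and convex. The remaining point is norm separability of $C$. The set $D$ of all rational convex combinations of the $p_n$ is countable; I claim its norm closure contains $C$. Indeed, the norm closure $\overline{D}^{\|\cdot\|}$ is a norm-closed convex set, so it is also weak$^*$ closed (a norm-closed convex set in a dual space need not be weak$^*$ closed in general — this is the subtle point — so instead I argue: $\overline{D}^{\|\cdot\|}$ is convex and norm-closed, and it contains all $p_n$; one shows it is weak$^*$ closed by noting it equals the norm-closure of the span-type set, but to be safe the cleaner route is) ...

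Here the cleaner route, and the step I expect to be the main obstacle, is this: rather than closing $D$ in norm, note that $\{p_n\} \cup \{0\}$ is norm separable trivially (it is countable), and the convex hull $co(\{p_n\})$ is norm separable since it is a continuous image of a separable set; its norm closure $\overline{co}^{\|\cdot\|}\{p_n\}$ is therefore norm separable. Now $\overline{co}^{\|\cdot\|}\{p_n\}$ is convex and norm-closed; since on the bounded set $M B_{X^*}$ the norm topology is finer than the weak$^*$ topology, the weak$^*$ closure of $co\{p_n\}$ is contained in the weak$^*$ closure of $\overline{co}^{\|\cdot\|}\{p_n\}$, and one needs that this weak$^*$ closure is still norm separable. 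This follows because $\overline{co}^{\|\cdot\|}\{p_n\}$ is norm separable and weak$^*$ metrizable on the weak$^*$ compact ball (by part $(1)$ applied to the separable weak$^*$ compact set $\overline{co}^{w^*}\{p_n\}$, whose weak$^*$ topology is then metrizable), so weak$^*$ convergence inside it is sequential and a weak$^*$-dense sequence can be extracted from the norm-dense one. Combining, $C = \overline{co}^{w^*}\{p_n\}$ is weak$^*$ compact, convex, and norm separable, which is what we wanted. The one genuine care-point throughout is not to confuse norm closure with weak$^*$ closure; it is resolved by first establishing weak$^*$ metrizability of $C$ via part $(1)$ and then transferring a countable norm-dense subset of $co\{p_n\}$ across.
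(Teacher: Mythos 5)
In part $(1)$ your construction has a small but real gap: choosing $x_{j,k}\in S_X$ with merely $\langle p_j-p_k,x_{j,k}\rangle\neq 0$ is not enough to ``pass to the limit.'' If $p,p'\in K$ satisfy $\langle p-p',x_n\rangle=0$ for all $n$ and $p_j\to p$, $p_k\to p'$ in norm, you only learn that $\langle p_j-p_k,x_{j,k}\rangle\to 0$, which is perfectly compatible with $\langle p_j-p_k,x_{j,k}\rangle\neq 0$ and with $\|p-p'\|>0$; no contradiction appears. The fix is to make the functionals (nearly) norming: choose $x_{j,k}\in S_X$ with $\langle p_j-p_k,x_{j,k}\rangle>\tfrac12\|p_j-p_k\|$, which is immediate from the definition of the dual norm. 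Then $\|p_j-p_k\|\le 2(\|p_j-p\|+\|p_k-p'\|)$ and the limit argument closes. This is exactly the quantitative control the paper secures (there via Bishop--Phelps applied to a dense sequence in $K-K$; your route, once repaired, is actually a bit more elementary since near-attainment suffices). Your deduction of metrizability from Lemma \ref{wellknow} is the same as the paper's and is fine.

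Part $(2)$ is where the proposal genuinely fails: your argument establishes only that $C=\overline{co}^{w^*}\{p_n\}$ is the weak$^*$ closure of a countable set (which is true by definition and has no content), not that $C$ is \emph{norm} separable. The step ``a weak$^*$-dense sequence can be extracted from the norm-dense one'' produces weak$^*$ separability, and the weak$^*$ closed convex hull of a norm-separable set need not be norm separable: take a countable weak$^*$-dense subset $D$ of $B_{\ell^\infty}=B_{(\ell^1)^*}$; then $\overline{co}^{w^*}(D)=B_{\ell^\infty}$, which is not norm separable although $co(D)$ is. Your proof uses nothing about $(p_n)_n$ beyond boundedness, so if it were valid it would contradict this example. (There is also a circularity: you invoke part $(1)$ to get weak$^*$ metrizability of $\overline{co}^{w^*}\{p_n\}$, but part $(1)$ requires the norm separability you are trying to prove.) The essential missing ingredients are the ones the paper uses: by Milman's theorem the extreme points of $C$ lie in $\overline{\{p_n\}}^{w^*}=\{p_n\}\cup\{0\}$ --- this is where weak$^*$ nullity of the sequence enters --- hence $ex(C)$ is countable, and then Haydon's theorem guarantees that a weak$^*$ compact convex set with norm-separable extreme points is the \emph{norm}-closed convex hull of those extreme points, hence norm separable. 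Some such nontrivial input is unavoidable here; elementary closure-swapping cannot deliver the conclusion.
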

\begin{proof} $(1)$ Since $K$ is norm separable, then $K- K:=\lbrace a- b/ (a, b) \in K\times K \rbrace$ is also norm separable and so there exists a sequence $(q_n)_n$ of $K- K$ which is dense in $K- K$.
According to the Bishop-Phelps  theorem \cite{BP}, the set 
$$D=\{r \in X^*\mid r \text{ attains its supremum on the sphere} \hspace{2mm}S_X\}$$
is norm-dense in the dual $X^*$. Thus, for each $n \in \N$, there exists $r_n \in D$ such that $\|q_n-r_n\|<\frac 1 {1+ n}$. For each 
$n\in \N$, let $x_n \in S_X$ be such that $\|r_n\|=\langle r_n , x_n \rangle $. We claim that the sequence $(x_n)_n$ separate the points of $K$. Indeed, let $q\in K-K$ and suppose that 
$\langle q, x_n \rangle=0$, for all $n\in \N$. There exists a subsequence $(q_{n_k})_k\subset K - K$ such that $\|q_{n_k} - q\|< \frac 1 k$ for all $k\in \N^*$ and so we have $\|r_{n_k} - q\| < \frac 1 {1+n_k} + \frac 1 k$. It follows that 
\begin{eqnarray*} 
\|r_{n_k}\| &= & \langle r_{n_k}, x_{n_k}\rangle \\
                    &=& \langle r_{n_k}, x_{n_k}\rangle - \langle q, x_{n_k}\rangle \\
                    &\leq& \|r_{n_k} - q\|\\
                    &<& \frac 1 {1+ n_k } + \frac 1 k .
\end{eqnarray*}
Hence, for all $k\in \N^*$, $\|q\| \leq \|q-r_{n_k}\| + \|r_{n_k}\| < 2(\frac 1 {1+n_k} + \frac 1 k)$, which implies that $q=0$, and so that $(x_n)_n$ separate the points of $K$. Now, suppose that $K$ is weak$^*$ compact subset of $X^*$. We show that the weak$^*$ topology of $X^*$ restricted to $K$ is metrizable. Indeed, each $x \in X$ determines a seminorm $\nu_x$ on $X^*$ given by $$\nu_x(p)=|\langle p, x \rangle|, \hspace{4mm} p\in X^*.$$
The family of seminorms $(\nu_x)_{x\in X}$ induces the weak$^*$ topology $\sigma(X^*,X)$
on $X^*$. The subfamily $(\nu_{x_n})_n$ also induces a topology on $X^*$, which we will
call $\mathcal{T}$. Since this is a smaller family of seminorms, we have $\mathcal{T} \subseteq \sigma(X^*,X)$. Suppose that $p, p' \in K$ and $\nu_{x_n}(p-p')=0$ for all $n \in \N$. Then we have $\langle p, x_n \rangle = \langle p', x_n \rangle$ for all $n\in \N$ and so we have that $p=p'$ since $(x_n)_n$ separates the points of $K$. Consequently, $K$ is Hausdorff with respect to the topology $\mathcal{T}_{|K}$ (the restriction of $\mathcal{T}$ to $K$). Thus $\mathcal{T}_{|K}$ is a Hausdorff topology on $K$ induced from a countable family
of seminorms, so this topology is metrizable. More precisely, $\mathcal{T}_{|K}$ is induced from the metric
 $$d(p,p'):=\sum_{n=0}^{+\infty} 2^{-n} \frac{\nu_{x_n}(p-p')}{1+\nu_{x_n}(p-p')}.$$
Then we have that $K$ is Hausdorff with respect to $\mathcal{T}_{|K}$, and is
compact with respect to $\sigma(X^*,X)_{|K}$. Lemma \ref{wellknow} implies that
$\mathcal{T}_{|K}=\sigma(X^*,X)_{|K}$. Hence $\sigma(X^*,X)_{|K}$ is metrizable.

$(2)$ Let $(p_n)_n$ be a weak$^*$ null sequence in $X^*$ and set $K=\overline{co}^{w^*}\lbrace p_n: n\in \N\rbrace$. Clearly $K$ is a convex and weak$^*$ compact subset of $X^*$. According to Haydon's theorem [\cite{H}, Theorem 3.3] the weak$^*$ compact convex set $K$ is the norm closed convex hull of its
extreme points whenever $ex(K)$ (the set of extreme points of $K$) is norm separable.  By the Milman theorem  [\cite{Ph1}, p.9]
$ex(K) \subset \overline{\lbrace p_n: n\in \N \rbrace}^{w^*}=\lbrace p_n : n\in \N\rbrace \cup \lbrace 0 \rbrace $ so that $ex(K)$ is norm separable and, hence, by Haydon's theorem, $K$ itself is weak$^*$ compact, convex, and norm separable.
\end{proof}

The following proposition will be used in the proof of Theorem \ref{recip}.

\begin{Prop} \label{gmin} Let $X$ be a Banach space and $K$ be a weak$^*$ compact and norm separable subset of $X^*$ containing $0$.
Then, there exists a continuous seminorm $h$ on $X^*$ satisfying

$(1)$ $h$ is weak$^*$ lower semicontinuous and sequentially weak$^*$ continuous,  

$(2)$ the restriction $h_{|K}$ of $h$ to $K$ has a weak$^*$-strong minimum at $0$.


\end{Prop}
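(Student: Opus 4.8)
The plan is to produce $h$ explicitly from the separating sequence furnished by Lemma~\ref{weak}(1). Since $K$ is norm separable and weak$^*$ compact, that lemma gives a sequence $(x_n)_{n\in\N}$ in $S_X$ separating the points of $K$, and moreover the weak$^*$ topology restricted to $K$ is induced by the metric
$$d(p,p')=\sum_{n=0}^{+\infty}2^{-n}\,\frac{|\langle p-p',x_n\rangle|}{1+|\langle p-p',x_n\rangle|}.$$
I would then define
$$h(p):=\sum_{n=0}^{+\infty}2^{-n}\,|\langle p,x_n\rangle|,\qquad p\in X^*.$$
Because $|\langle p,x_n\rangle|\le\|p\|$, the series converges and $h$ is a seminorm with $0\le h(p)\le 2\|p\|$; in particular $h$ is norm-continuous, and $h(0)=0\le h(q)$ for every $q\in X^*$, so $0$ is a global minimizer of $h$, and it lies in $K$ by hypothesis.

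For the weak$^*$ lower semicontinuity, each partial sum $h_N(p)=\sum_{n=0}^{N}2^{-n}|\langle p,x_n\rangle|$ is weak$^*$ continuous, so $h=\sup_N h_N$ is a supremum of weak$^*$ continuous functions, hence weak$^*$ lsc. For the sequential weak$^*$ continuity, if $p_k\to p$ weak$^*$ then $\sup_k\|p_k\|=:M<\infty$ by the uniform boundedness principle, so $2^{-n}|\langle p_k,x_n\rangle|\le 2^{-n}M$ uniformly in $k$ while each term tends to $2^{-n}|\langle p,x_n\rangle|$; a dominated-convergence argument for series then yields $h(p_k)\to h(p)$.

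It remains to check that $h_{|K}$ has a weak$^*$-strong minimum at $0$. Suppose $(p_n)_n\subset K$ satisfies $h(p_n)\to 0=h(0)$. For each fixed $m$ we have $2^{-m}|\langle p_n,x_m\rangle|\le h(p_n)\to 0$, hence $\langle p_n,x_m\rangle\to 0$ for every $m$. Substituting this into the expression for $d$ and applying dominated convergence once more (each term of $d(p_n,0)$ is bounded by $2^{-m}$ and tends to $0$) gives $d(p_n,0)\to 0$; since $d$ metrizes the weak$^*$ topology on $K$, this means precisely that $p_n\to 0$ weak$^*$. This establishes (2), while (1) was proved above.

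I do not expect a genuine obstacle: the construction is essentially forced once one has the separating sequence, and the only slightly delicate points are the two interchanges of limit and infinite summation, both handled by the uniform summable bounds $2^{-n}M$ and $2^{-n}$. The hypothesis $0\in K$ enters only to ensure that the point at which the minimum is attained actually belongs to $K$.
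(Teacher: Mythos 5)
Your proof is correct and follows essentially the same route as the paper: both build $h$ from the separating sequence of Lemma~\ref{weak}(1) as a weighted combination of the $|\langle \cdot, x_n\rangle|$ (the paper uses $\bigl(\sum_k 2^{-k}\langle \cdot,x_k\rangle^2\bigr)^{1/2}$ instead of your $\ell^1$-type sum, an immaterial difference), obtain weak$^*$ lower semicontinuity as a supremum of weak$^*$ continuous functions and sequential weak$^*$ continuity from uniform convergence on bounded sets. The only (harmless) variation is at the end: the paper notes that $0$ is the unique minimizer on $K$ and invokes the general fact that a unique minimum of an lsc function on a compact metric space is a strong minimum, whereas you verify the weak$^*$-strong minimum property directly through the metric $d$.
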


\begin{proof} Using Lemma \ref{weak}, there exists a sequence $(x_k)_k\subset S_X$ which separate the points of $K$. Define the function $h : X^* \longrightarrow \R $ as follows:
$$h(x^*)=(\sum_{k\geq 0} 2^{-k}(\langle x^*, x_k\rangle)^2)^{\frac 1 2}, \hspace{4mm} \forall x^*\in X^*.$$
It is clear that $h$ is a seminorm, and since $h(x^*) \leq \|x^*\|$ for all $x^*\in X^*$, it is also continuous. Since $h$ is the supremum of a sequence of weak$^*$ continuous functions, it is weak$^*$ lower semicontinuous.
On the other hand, since the series $\sum_{k\geq 0} 2^{-k}(\langle x^*, x_k\rangle)^2$ uniformly converges on bounded sets of $X^*$ and since the maps $\hat{x}_k: x^* \mapsto \langle x^*, x_k\rangle$ are weak$^*$ continuous for all $k\in \N$, then $h$ is sequentially weak$^*$ continuous. If $p\in K$ and $h(p)=0$, then $\langle p, x_k\rangle=0$ for all $k\in \N$ which implies that $p=0$, since the sequence $(x_k)_k$ separate the points of $K$. Hence, the restriction of $h$ to $K$ has a unique minimum at $0$. This minimum is necessarily a weak$^*$-strong minimum since $K$ is weak$^*$ metrizable by Lemma \ref{weak}, this follows from a general fact which say that for every lower semicontinuous function on a compact metric space $(K,d)$, a unique minimum is necessarily a strong minimum for the metric $d$ in question. 
\end{proof}


\begin{center}
\section{Limited operators and differentiability.} \label{S3}
\end{center}

Recall that the domain of a function $f: X \longrightarrow \R\cup \lbrace+\infty\rbrace$, is the set 
$$dom(f):=\lbrace x \in X/ f(x) < +\infty\rbrace.$$ 
For a function $f$ with $dom(f)\neq \emptyset$, the Fenchel transform of $f$ is defined on the dual space for all $p\in X^*$ by
$$f^*(p):=\sup_{x\in X} \lbrace \langle p,x \rangle - f(x)\rbrace.$$
The second transform $(f^*)^*$ is defined on the bidual $X^{**}$ by the same formula. We denote by $f^{**}$, the restriction of $(f^*)^*$ to $X$, where $X$ is identified to a subspace of $X^{**}$ by the canonical embedding. Recall that the Fenchel theorem state that $f=f^{**}$ if and only if $f$ is convex lower semicontinuous on $X$. 

\vskip5mm

The "if" part of Theorem \ref{main} is given by the following theorem.

\vskip5mm

\begin{Thm} \label{recip} Let $Y$ and $X$ be Banach spaces and let $T: Y \longrightarrow X$ be a linear continuous operator. Suppose that $f\circ T$ is Fr\'echet differentiable at $y\in Y$ whenever $f: X \longrightarrow \R $ is convex continuous and  G\^ateaux differentiable at $T(y)\in Y$. Then $T$ is a limited operator.
\end{Thm}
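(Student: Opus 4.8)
\medskip

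\noindent\textbf{Plan of the proof.} The plan is to prove the contrapositive: assuming that $T$ is not limited, I will produce a convex continuous function $f\colon X\to\R$ which is G\^ateaux differentiable at $T(0)=0$ while $f\circ T$ is \emph{not} Fr\'echet differentiable at $0\in Y$, contradicting the hypothesis. Since $T$ is not limited, its adjoint $T^*$ does not map every weak$^*$ null sequence of $X^*$ to a norm null sequence; so I fix a weak$^*$ null sequence $(p_n)_n$ in $X^*$ together with a number $\delta>0$ such that $\|T^*p_n\|\ge\delta$ for all $n$ (passing to a subsequence if necessary). Put $K:=\overline{co}^{w^*}\{p_n:n\in\N\}$. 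By Lemma~\ref{weak}~$(2)$, $K$ is a convex, weak$^*$ compact, norm separable subset of $X^*$, and $0\in K$ since $p_n\to 0$ weak$^*$. Applying Proposition~\ref{gmin} to $K$ yields a continuous seminorm $h$ on $X^*$ which is weak$^*$ lower semicontinuous and weak$^*$ sequentially continuous and for which $h_{|K}$ has a weak$^*$-strong minimum at $0$; in particular $h\ge 0$, $h(0)=0$, and $0$ is the only point of $K$ at which $h$ vanishes.

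Next I define $f(x):=\sup_{p\in K}\big(\langle p,x\rangle-h(p)\big)$ for $x\in X$. Since $K$ is norm bounded, $f$ is real-valued and Lipschitz, hence a convex continuous function on $X$. The function on $X^*$ equal to $h$ on $K$ and to $+\infty$ off $K$ is proper, convex and weak$^*$ lower semicontinuous, so by the Fenchel--Moreau theorem (for the weak$^*$ topology on $X^*$) it coincides with $f^*$; that is, $f^*(p)=h(p)$ for $p\in K$ and $f^*(p)=+\infty$ otherwise. Consequently $f(0)=0$ (because $h\ge 0$ and $h(0)=0$), the function $f$ is nonnegative, and $p\in\partial f(0)\iff f^*(p)=-f(0)=0\iff p\in K\text{ and }h(p)=0\iff p=0$. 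Hence $\partial f(0)=\{0\}$, which, $f$ being convex and continuous, implies that $f$ is G\^ateaux differentiable at $0=T(0)$, with derivative $0$.

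Finally I check that $g:=f\circ T$ is not Fr\'echet differentiable at $0$. It is convex and continuous, with $g\ge 0$ and $g(0)=0$, so $0$ is a global minimum of $g$; were $g$ Fr\'echet differentiable at $0$, its derivative would be $0$, so that $\frac1t\sup_{\|u\|_Y=1}g(tu)\to 0$ as $t\to 0^+$. But interchanging the two suprema and using $\sup_{\|u\|_Y=1}\langle p,Tu\rangle=\|T^*p\|$, for every $t>0$ one has
$$\frac1t\,\sup_{\|u\|_Y=1}g(tu)=\frac1t\,\sup_{p\in K}\big(t\,\|T^*p\|-h(p)\big)\ \ge\ \sup_{n}\Big(\|T^*p_n\|-\frac{h(p_n)}{t}\Big)\ \ge\ \delta-\frac1t\inf_n h(p_n)=\delta,$$
because $h(p_n)\to 0$ (as $p_n\to 0$ weak$^*$ and $h$ is weak$^*$ sequentially continuous) together with $h\ge 0$ force $\inf_n h(p_n)=0$. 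This contradiction shows that $y=0$ and the function $f$ violate the hypothesis, and therefore $T$ must be a limited operator.

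The step I expect to be the crux is the G\^ateaux differentiability of $f$ at $0$, i.e.\ the fact that $\partial f(0)=\{0\}$: this is exactly where the point-separation and weak$^*$-strong-minimum properties supplied by Proposition~\ref{gmin} (hence, through Lemma~\ref{weak}, Haydon's theorem and the Bishop--Phelps theorem) are used. In contrast, the persistence of non-Fr\'echet-differentiability after composition with $T$ is the soft part, obtained at once from the lower bound $\|T^*p_n\|\ge\delta$ and the elementary identity $\sup_{\|u\|=1}\langle p,Tu\rangle=\|T^*p\|$.
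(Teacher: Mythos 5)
Your proof is correct, and the core construction is the same as the paper's: both take the weak$^*$ null sequence witnessing non-limitedness, form $K=\overline{co}^{w^*}\{p_n\}$, invoke Lemma \ref{weak} and Proposition \ref{gmin} to get the seminorm $h$, and use (up to restriction to $X$) the conjugate of $g=h+\delta_K$ as the test function $f$. Where you genuinely diverge is in how the two differentiability statements are verified. The paper argues in the direct sense and leans twice on the Asplund--Rockafellar duality: G\^ateaux differentiability of $f$ at $0$ is read off from the weak$^*$-strong minimum of $f^*=g$, and then the hypothesis gives Fr\'echet differentiability of $f\circ T$ at $0$, hence a norm-strong minimum of $(f\circ T)^*$ at $0$, against which $(T^*p_n)_n$ is shown to be a minimizing sequence. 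You instead argue contrapositively and replace both citations by hand computations: G\^ateaux differentiability via $\partial f(0)=\{0\}$ (using Fenchel--Moreau for the weak$^*$ pairing to identify $f^*=h+\delta_K$, plus the fact that a singleton subdifferential at a point of continuity gives G\^ateaux differentiability), and failure of Fr\'echet differentiability via the explicit lower bound $\frac1t\sup_{\|u\|=1}(f\circ T)(tu)\ge\delta$ obtained by exchanging suprema and using $\sup_{\|u\|=1}\langle p,Tu\rangle=\|T^*p\|$ with $h(p_n)\to 0$. Your route is more self-contained and makes the mechanism of the counterexample transparent; the paper's route is shorter on the page and, by phrasing everything through strong minima of conjugates, matches the duality language used elsewhere in the paper (e.g.\ in the proof of Theorem \ref{BBF} and the canonical PGNF construction). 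Both verifications are sound; in particular your difference-quotient estimate is a legitimate elementary substitute for the paper's ``minimizing sequence for $(f\circ T)^*$'' argument.
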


\begin{proof} Let $(p_n)_n$ be a weak$^*$ null sequence in $X^*$. We want to prove that $\|T^*(p_n)\|_{Y^*}\rightarrow 0$.  
Set $$K=\overline{co}^{w^*}\lbrace p_n: n\in \N\rbrace.$$ According to Lemma \ref{weak}, $K$ is convex weak$^*$ compact and norm separable. Using Proposition \ref{gmin}, there exists a continuous seminorm which is weak$^*$ lower semicontinuous and sequentially weak$^*$ continuous $h : X^* \longrightarrow \R $ such that the restriction $h_{|K}$ of $h$ to $K$ has a weak$^*$-strong minimum at $0$ and in particular $\min_K h=h(0)=0$. Since the sequence $(p_n)_n$ weak$^*$ converges to $0$, it follows that $\lim_{n} h(p_n)=h(0)=\min_K h$. Thus, $(p_n)_n$ is a minimizing sequence for $h_{|K}$. Set $g= h+\delta_{K}$, where $\delta_{K}$ denotes the indicator function, which is equal to $0$ on $K$ and equal to $+\infty$ otherwise. Since $K$ is convex, weak$^*$-closed and norm bounded, then $g$ is a convex and weak$^*$ lower semicontinuous function with a norm bounded domain $dom(g)=K$. Moreover we have, 

$(1)$ $g(p) > 0=g(0)=\min_{X^*} (g)$ for all $p\in X^*\setminus \lbrace 0 \rbrace$.

$(2)$ $\lim_{n\rightarrow +\infty} g(p_n)=\min_{X^*} (g)$.
\vskip5mm
Hence, there exists a convex and Lipschitz continuous function $f : X \longrightarrow \R$ such that $g=f^*$ (we can take $f=g^*_{|X}$). The function $f$ is  G\^ateaux differentiable at $0$ with G\^ateaux derivative $\nabla f(0)=0$, this is due to the fact that $f^*=g$ has a weak$^*$-strong minimum at $0$ (we can see [Corollary 1. \cite{AR}]). Thus, from our hypothesis, $f\circ T$ is Fr\'echet differentiable at $0$ with Fr\'echet derivative equal to $0$. It follows that $(f\circ T)^*$ has a norm-strong minimum at $0$ (see [Corollary 2. \cite{AR}]). Now, we prove that $(T^*(p_n))_n$ is a minimizing sequence for $(f\circ T)^*$, which will implies that $\|T^*(p_n)\|_{Y^*}\rightarrow 0$. Indeed, on one hand, we have  $0=\min_{X^*} (g)=-g^*(0)=-f(0)$. On the other hand we have
\begin{eqnarray*}
0=-f(0) \leq  \sup_{y\in Y}\lbrace  - f\circ T(y)\rbrace&:=& (f\circ T)^*(0)\\
                           &\leq& \sup_{x\in X}\lbrace - f(x)\rbrace\\
                      &=& f^*(0)\\
                      &=& g(0)\\
                      &=& 0.
\end{eqnarray*}
It follows that $(f\circ T)^*(0)=0$. Hence, since $(f\circ T)^*$ has a minimum at $0$, we obtain
\begin{eqnarray*}
0=(f\circ T)^*(0) \leq (f\circ T)^*(T^*(p_n))&:=&\sup_{y\in Y}\lbrace \langle T^*(p_n), y \rangle - f\circ T(y)\rbrace \\
                      &=& \sup_{y\in Y}\lbrace \langle p_n, T(y) \rangle - f(T(y))\rbrace\\
                      &\leq& \sup_{x\in X}\lbrace \langle p_n, x \rangle - f(x)\rbrace\\
                      &=& f^*(p_n)\\
                      &=& g(p_n).
\end{eqnarray*}
Since, $g(p_n)\rightarrow 0$, it follows that $(f\circ T)^*(T^*(p_n))\rightarrow 0=(f\circ T)^*(0)$. In other words, $(T^*(p_n))_n$ is a minimizing sequence for $(f\circ T)^*$. Since $(f\circ T)^*$ has a norm-strong minimum at $0$, we obtain that $\|T^*(p_n)\|_{Y^*}\rightarrow 0$, which implies that $T$ is a limited operator. 
\end{proof}

\vskip5mm

The "only if" part of Theorem \ref{main} is given by the following theorem. 

\begin{Thm} \label{princ}  Let $Y$ and $X$ be two Banach spaces and $T: Y \longrightarrow X$ be a limited operator. Let $f: X \longrightarrow \R \cup \lbrace +\infty \rbrace $, be a convex lower semicontinuous function and let $a \in Y$ such that $T(a)$ belongs to the interior of $dom(f)$. Then, $f\circ T$ is Fr\'echet differentiable at $a\in Y$ with Fr\'echet-derivative $T^*(Q)\in Y^*$, whenever $f$ is G\^ateaux differentiable at $T(a)\in X$ with G\^ateaux-derivative $Q\in X^*$.
\end{Thm}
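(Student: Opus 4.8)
The plan is to reduce everything to a statement about convergence of minimizing sequences, the same dictionary used in the proof of Theorem~\ref{recip} but run in the opposite direction. First I would record the differentiability--strong-minimum correspondence precisely: for a convex lower semicontinuous $f$ on $X$ with $T(a)\in\inte(dom(f))$, $f$ is G\^ateaux differentiable at $T(a)$ with derivative $Q$ if and only if $f^*$ attains a weak$^*$-strong minimum at $Q$ over the ``slice'' function $p\mapsto f^*(p)-\langle p,T(a)\rangle$; and $f\circ T$ is Fr\'echet differentiable at $a$ with derivative $T^*(Q)$ if and only if $(f\circ T)^*$ attains a norm-strong minimum at $T^*(Q)$ over $p\mapsto (f\circ T)^*(p)-\langle p,a\rangle$ (this is exactly the pair of statements [Corollary 1, \cite{AR}] and [Corollary 2, \cite{AR}] invoked earlier, so I may quote them). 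By translating $f$ by the affine function $x\mapsto f(T(a))+\langle Q,x-T(a)\rangle$ we may and do assume $T(a)=0$, $f(0)=0$, $Q=0$; then the hypothesis is that $0=\min_X f$ is a weak$^*$-strong minimum of $f^*$ at $0$, and the goal is that $0=\min_Y(f\circ T)$ is a \emph{norm}-strong minimum of $(f\circ T)^*$ at $0$.

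Next I would identify $(f\circ T)^*$. Since $T$ is continuous and linear, a direct computation (or the standard chain rule for Fenchel conjugates, using $T(a)=0\in\inte(dom f)$ so that the appropriate qualification condition holds) gives $(f\circ T)^* = T^{*}\!\ast f^{*}$, the infimal-convolution-type formula; more usefully for us, for every $q\in Y^*$,
\[
(f\circ T)^*(q)=\inf\{\, f^*(p) : p\in X^*,\ T^*(p)=q \,\}.
\]
In particular $(f\circ T)^*(0)=\inf\{f^*(p):T^*(p)=0\}=f^*(0)=0$ (attained at $p=0$), consistent with $0$ being the minimum of $f\circ T$. Now let $(q_n)_n\subset Y^*$ be any minimizing sequence for $(f\circ T)^*$, i.e. $(f\circ T)^*(q_n)\to 0$; I must show $\|q_n\|_{Y^*}\to 0$. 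Using the infimum formula, pick $p_n\in X^*$ with $T^*(p_n)=q_n$ and $f^*(p_n)\le (f\circ T)^*(q_n)+\tfrac1n\to 0$, so $(p_n)_n$ is a minimizing sequence for $f^*$. Since $f^*$ has a weak$^*$-strong minimum at $0$, we get $p_n\xrightarrow{w^*}0$ in $X^*$. Here is where limitedness enters: $T$ limited means exactly that $T^*$ sends weak$^*$-null sequences to norm-null sequences, hence $\|q_n\|_{Y^*}=\|T^*(p_n)\|_{Y^*}\to 0$. This shows $0$ is a norm-strong minimum of $(f\circ T)^*$, and via the $\cite{AR}$ correspondence, $f\circ T$ is Fr\'echet differentiable at $a$ with derivative $T^*(Q)$.

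The main obstacle I anticipate is the conjugate chain-rule identity $(f\circ T)^*(q)=\inf\{f^*(p):T^*(p)=q\}$ with the infimum \emph{attained} (or at least attained up to an arbitrarily small error, which is all I used): one needs $T(a)=0$ to lie in the interior of $dom(f)$ to guarantee a constraint-qualification/continuity condition, and one must be careful that $f$ may take the value $+\infty$, so $f^*$ may only be proper, not continuous. A clean way to handle this without fuss is to note that for the purpose of the minimizing-sequence argument an exact infimum is unnecessary: by definition of $(f\circ T)^*(q_n)$ as a supremum over $y\in Y$ of $\langle q_n,y\rangle-f(T(y))$, and since $T(a)\in\inte(dom f)$ makes $f\circ T$ continuous at $a$ hence finite near $a$, a short direct estimate produces $p_n$ with $T^*(p_n)=q_n$ and $f^*(p_n)$ close to $(f\circ T)^*(q_n)$ — alternatively one simply applies the Fenchel--Rockafellar duality theorem to the pair $(f, -\langle q_n,\cdot\rangle\circ T^{-1})$ in the form already standard in the literature. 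Everything else is bookkeeping with the $\cite{AR}$ corollaries, translation by an affine function, and the defining property of a limited operator.
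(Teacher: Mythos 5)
Your proposal is correct, but it takes a genuinely different route from the paper's. The paper first localizes: it replaces $f$ by a global Lipschitz extension of its restriction to a ball around $T(a)$, so that $dom(f^*)\subset B_{X^*}$; it then uses only the ``easy'' half of the Asplund--Rockafellar duality (G\^ateaux differentiability of $f$ at $T(a)$ gives a weak$^*$-strong minimum of $p\mapsto f^*(p)-\langle p,T(a)\rangle$ at $Q$) and establishes the Fr\'echet estimate directly by contradiction: along a sequence $a+t_nh_n$ witnessing the failure of Fr\'echet differentiability it picks near-minimizers $p_n$ of $p\mapsto f^*(p)-\langle p,T(a+t_nh_n)\rangle$, checks that these also minimize the slice function at $T(a)$, concludes $p_n\rightarrow Q$ weak$^*$ and hence $\|T^*(p_n-Q)\|_{Y^*}\rightarrow 0$ by limitedness, and reaches the contradiction $\varepsilon<\|T^*(p_n-Q)\|_{Y^*}+\frac1n$. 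You instead quote two further pieces of convex duality: the converse direction of \cite{AR} (a norm-strong minimum of the conjugate slice implies Fr\'echet differentiability) and the Fenchel--Rockafellar chain rule $(f\circ T)^*(q)=\inf\{f^*(p):T^*(p)=q\}$ under the qualification $T(a)\in{\rm int}(dom(f))$. Both arguments turn on exactly the same mechanism --- minimizing sequences of the conjugate slice weak$^*$-converge to $Q$ by G\^ateaux differentiability, and $T^*$ converts them into norm-convergent sequences because $T$ is limited --- but yours is shorter and more modular at the price of more quoted machinery, while the paper's is more self-contained, needing only one direction of the duality plus elementary inequalities. Two details you should make explicit to close the argument: (i) for the weak$^*$-strong minimum of $p\mapsto f^*(p)-\langle p,T(a)\rangle$ to be meaningful on all of $X^*$ rather than on a bounded set, observe that local boundedness of $f$ near $T(a)$ gives $f^*(p)-\langle p,T(a)\rangle\geq r\|p\|-M$, so minimizing sequences are automatically bounded (or simply perform the paper's Lipschitz localization first); (ii) the chain-rule identity, with at least approximate attainment, requires $f$ to be finite and continuous at some point of $T(Y)$, which is precisely what $T(a)\in{\rm int}(dom(f))$ provides, as you correctly note.
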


\begin{proof}
Since $f$ is convexe lower semicontinuous and $T(a)$ is in the interior of $dom(f)$, there exists $r_a>0$ and $L_a>0$ such that $f$ is $L_a$-Lipschitz continuous on the closed ball $B_X(T(a),r_y)$. It is well known that there exists a convex $L_a$-Lipschitz continuous function $\tilde{f}_a$ on $X$ such that $\tilde{f}_a=f$ on $B_X(T(a),r_a)$ (See for instance Lemma 2.31 \cite{Ph}). It follows that $\tilde{f}_a \circ T = f\circ T$ on $B_Y(a,\frac{r_a}{\|T\|})$, since $T(B_X(a,\frac{r_a}{\|T\|}))$ is a subset of $B_X(T(a),r_a)$ (we can assume that $T\neq 0$).  Replacing $f$ by $\frac{1}{L_a}\tilde{f}_a$, we can assume without loss of generality that $f$ is convexe $1$-Lipschitz continuous on $X$. It follows that $dom(f^*)\subset B_{X^*}$ (the closed unit ball of $X^*$). 

\vskip5mm

{\it Claim.} Suppose that $f$ is G\^ateaux differentiable at $T(a)\in X$ with G\^ateaux-derivative $Q\in X^*$, then the function $q\mapsto f^*(q)-\langle q, T(a)\rangle$ has a weak$^*$-strong minimum on $B_{X^*}$ at $Q$.
\vskip5mm
{\it Proof of the claim.} See [Corollary 1. \cite{AR}].

\vskip5mm

Now, suppose by contradiction that $T^*(Q)$ is not the the Fr\'echet derivative of $f\circ T$ at $a$. There exist $\varepsilon >0$, $t_n\longrightarrow 0^+$ and $h_n\in Y$, $\|h_n\|_Y=1$ such that for all $n\in \N^*$, 

\begin{eqnarray} \label{eq4}
f\circ T(a+t_n h_n)- f\circ T(a) - \langle T^*(Q), t_n h_n \rangle > \varepsilon t_n.
\end{eqnarray}
Let $r_n=t_n/n$ for all $n\in \N^*$ and choose $p_n\in B_{X^*}$ such that
\begin{eqnarray} \label{eq5}
f^*(p_n)  - \langle p_n, T(a+t_n h_n)\rangle < \inf_{p\in B_{X^*}}\lbrace f^* (p) - \langle  p, T(a+ t_n h_n)\rangle \rbrace + r_n.
\end{eqnarray}
 From (\ref{eq5}) we get 
\begin{eqnarray*} 
f^*(p_n) - \langle p_n, T(a) \rangle  &< & \inf_{p\in B_{X^*}}\lbrace f^* (p) - \langle  p, T(a)\rangle \rbrace + 2 t_n \|T\|+ r_n.
\end{eqnarray*} 
This implies that the sequence $(p_n)_n$ minimize the function $q\mapsto f^*(q)-\langle q, T(a)\rangle$ on $B_{X^*}$. Using the claim, the function $q\mapsto f^*(q)-\langle q, T(a)\rangle$ has a weak$^*$-strong minimum on $B_{X^*}$ at $Q$, it follows that $(p_n)_n$ weak$^*$ converges to $Q$ and so (since $T$ is limited) we have 
\begin{eqnarray} \label{eq5'}
\|T^*(p_n -Q)\|_{Y^*}\longrightarrow 0.
\end{eqnarray}

\noindent On the other hand, since $f(T(a+t_n h_n))=f^{**}(T(a+t_n h_n))=- \inf_{p\in B_{X^*}}\lbrace f^* (p) - \langle  p, T(a+ t_n h_n)\rangle \rbrace$, using (\ref{eq5}) we obtain for all $y\in Y$

\begin{eqnarray*} 
f\circ T(a+t_n h_n) - \langle p_n, T(a+t_n h_n)\rangle &<& - f^*(p_n)  + r_n \nonumber \\
                                             &\leq& f\circ T(y) - \langle p_n, T(y)\rangle + r_n.
\end{eqnarray*}
Replacing $y$ by $a$ in the above inequality we obtain
\begin{eqnarray} \label{eq6}
f\circ T(a+t_n h_n) - \langle p_n, T(t_n h_n)\rangle &\leq& f\circ T(a) + r_n.
\end{eqnarray}
Combining (\ref{eq4}) and (\ref{eq6}) we get 
\begin{eqnarray} \label{eq7}
\varepsilon &<& \langle p_n, T(h_n)\rangle - \langle T^*(Q),h_n \rangle + r_n/t_n \nonumber\\
            &=& \langle T^*(p_n), h_n\rangle - \langle T^*(Q), h_n \rangle +\frac{1}{n} \nonumber\\
            &\leq& \|T^*(p_n -Q)\|_{Y^*} +\frac{1}{n} \nonumber        
\end{eqnarray}
which is a contradiction with (\ref{eq5'}). Thus $f\circ T$ is Fr\'echet differentiable at $a$ with Fr\'echet derivative $T^*(Q)$.
\end{proof}
\vskip5mm
Now, we give the proof of Theorem \ref{BBF}.

\begin{proof}[\bf Proof of Theorem \ref{BBF}] $(1)\Longrightarrow (2)$ is the deeper Josefson-Nissenzweig theorem [\cite{Dies}, Chapter XII]. 

$(2)\Longrightarrow (1)$ is well known.
 
$(2)\Longrightarrow  (3)$ Let $(p_n)_n$ be a weak$^*$ null sequence in $Y^*$ such that $\inf_n \|p_n\|>0$ and set $K=\overline{co}^{w^*}\lbrace p_n: n\in \N\rbrace$. By Lemma \ref{weak}, the set $K$ is convex norm separable and weak$^*$ compact metrizable. On the other hand, from Proposition \ref{gmin}, there exists a continuous seminorm $h$ which is weak$^*$ lower semicontinuous  and weak$^*$ sequentially continuous on $Y^*$ such that the restriction of $h$ to $K$ has a weak$^*$-strong minimum at $0$. It remains to show that $0$ is not a norm-strong minimum for $h_{|K}$. Indeed, since $(p_n)_n$ is weak$^*$ null and $h$ is weak$^*$ sequentially continuous, then $\lim_n h(p_n)=h(0)=\min_K h$. So $(p_n)_n$ is a minimizing sequence for $h_{|K}$ which not converges to $0$ since $\inf_n \|p_n\|>0$. Hence, $0$ is not a norm-strong minimum for $h_{|K}$.

$(3)\Longrightarrow  (2)$ Since $0$ is not a norm-strong minimum for the restriction $h_{|K}$, there exists a sequence $(p_n)_n$ that minimize $h$ on $K$ but $\|p_n\|\nrightarrow 0$. Since $h_{|K}$ has a weak$^*$-strong minimum at $0$, it follows that $(p_n)_n$ weak$^*$ converges to $0$. Hence, $(p_n)_n$ weak$^*$ converges to $0$ but $\|p_n\|\nrightarrow 0$. Thus, there exists a JN-sequence in $Y^*$. 

$(2) \Longrightarrow (4)$ This part is given by taking $X=Y$ and $T=I$ the identity map. Indeed, there exists a sequence $(p_n)_n$ which weak$^*$ converges to $0$ but $\inf_n \|I^*(p_n)\| =\inf_n \|p_n\|>0$. So $I$ cannot be a limited operator. 

$(4)\Longrightarrow (5)$. Indeed, if there exists a Banach space $X$ and a non-limited operator $T: Y \longrightarrow X$, by using Theorem \ref{main}, there exists a convex continuous function $f: X\longrightarrow \R$ and a point $y\in Y$ such that $f$ is  G\^ateaux differentiable at $T(y)\in X$ but $f\circ T$ is not Fr\'echet differentiable at $y$. So $f\circ T$ is G\^ateaux but not Fr\'echet differentiable at $y$. Hence, $f\circ T$ is a convex continuous PGNF-function on $Y$. 

$(5) \Longrightarrow (2)$ Let $f$ be a PGNF-function on $Y$. We can assume without loss of generality that $f$ is G\^ateaux differentiable at $0$ with G\^ateaux-derivative equal to $0$, but $f$ is not Fr\'echet differentiable at $0$. It follows from classical duality result (see Corollary 1. and Corollary 2. in \cite{AR}) that $f^*$ has a weak$^*$-strong minimum but not norm-strong minimum at $0$. Since $0$ is not a norm-strong minimum for $f^*$, there exists a sequence $(p_n)_n\in X^*$ minimizing $f^*$ such that $\|p_n\|\nrightarrow 0$. On the other hand, since $f^*$ has a weak$^*$-strong minimum at $0$, and $(p_n)_n$ minimize $f^*$, we have that $(p_n)_n$ weak$^*$ converges to $0$. Thus, $(p_n)_n$ weak$^*$ converges to $0$ but $\|p_n\|\nrightarrow 0$. Hence, there exists a JN-sequence.
\end{proof}

\paragraph{Canonical construction of PGNF-function.} There exist different way to build a PGNF-function in infinite dimentional Banach spaces. We can find examples of such constructions in \cite{BF}. We present below a different method for constructing a PGNF-function on a Banach space $X$ canonically from a JN-sequence. Given a JN-sequence $(p_n)_n\subset X^*$, we set $K=\overline{co}^{w^*}\lbrace p_n: n\in \N\rbrace$. Using Lemma \ref{weak}, there exists a sequence $(x_n)_n \in S_X$ which separates the points of $K$, and as in the proof of Proposition \ref{gmin}, there exist a continuous seminorm $h$ which is weak$^*$ lower semicontinuous and weak$^*$ sequentially continuous such that $h_{|K}$ has a weak$^*$-strong minimum at $0$. The function $h$ is given explicitly as follows 
$$h(x^*)=(\sum_{n\geq 0} 2^{-n}(\langle x^*, x_n\rangle)^2)^{\frac 1 2}, \hspace{4mm} \forall x^*\in X^*.$$
Since $(p_n)_n$ weak$^*$ converges to $0$, it follows that $(p_n)$ is a minimizing sequence for $h_{|K}$. Since $(p_n)_n$ is a JN-sequence, it follows that $0$ is not a norm-strong minimum for $h_{|K}$. Define the function $f$ by
$$f(x)= (h + \delta_K)^*(\hat{x}), \hspace{4mm} \forall x\in X,$$ where $\delta_{K}$ denotes the indicator function, which is equal to $0$ on $K$ and equal to $+\infty$ otherwise and where for each $x\in X$, we denote by  $\hat{x}\in X^{**}$ the linear map $x^* \mapsto \langle x^*, x \rangle$ for all $x^*\in X^*$. Then $f$ is convex Lipschitz continuous, G\^ateaux differentiable at $0$ (since $h + \delta_K$ has a weak$^*$-strong minimum) but not Fr\'echet differentiable at $0$ (because $0$ is not a norm-strong minimum for $h + \delta_K$).

\begin{center}
\section{Appendix.} \label{S4}
\end{center}

There exists a class of Banach spaces $(E,\|.\|_E)$ such that the canonical embedding $i: E\longrightarrow E^{**}$ is a limited operator. This class contains in particular the space $c_0$ and any closed subspace $F$ of $c_0$ (This class is also stable by product and quotient. For more information see \cite{CGL}). In this setting, Theorem \ref{princ} gives immediately the following corollary.
\vskip5mm
\begin{Cor} \label{Za1} Suppose that the canonical embedding $i: E\longrightarrow E^{**}$ is a limited operator. Let $g: E^{**} \longrightarrow \R \cup \lbrace +\infty \rbrace$ be a convex lower semicontinuous function. Suppose that $x\in E$ belongs to the interior of  $dom(g)$ and that $g$ is G\^ateaux differentiable at $x\in E$ (we use the identification $i(x)=x$), then the restriction of $g$ to $E$ is Fr\'echet differentiable at $x$. In particular, if $f : E \longrightarrow \R \cup \lbrace +\infty \rbrace$ is convex lower semicontinuous function, $x\in E$ belongs to the interior of $dom((f^{*})^*)$ and $(f^{*})^*$ is G\^ateaux differentiable at $x$, then $f$ is  Fr\'echet differentiable at $x$. 
\end{Cor}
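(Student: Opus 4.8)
The plan is to deduce Corollary \ref{Za1} as a direct specialization of Theorem \ref{princ}. First I would set $Y=E$, $X=E^{**}$ and $T=i$, the canonical embedding; by hypothesis $i$ is a limited operator, so Theorem \ref{princ} applies to it. Taking $f=g$ (convex and lower semicontinuous on $E^{**}$) and $a=x\in E$, and using the identification $i(x)=x$ so that $T(a)=x$ lies in the interior of $dom(g)$, Theorem \ref{princ} tells us that whenever $g$ is G\^ateaux differentiable at $x\in E^{**}$ (say with derivative $Q\in E^{***}$), the composition $g\circ i$ is Fr\'echet differentiable at $x\in E$ with derivative $i^*(Q)\in E^*$.

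Next I would observe that $g\circ i$ is exactly the restriction $g_{|E}$ of $g$ to $E$: for $y\in E$ one has $(g\circ i)(y)=g(i(y))=g(y)$ once $E$ is identified with $i(E)\subset E^{**}$. This already yields the first assertion. The only genuinely delicate point is the bookkeeping around these identifications — one must check that ``$g$ G\^ateaux differentiable at $x\in E$ regarded as a point of $E^{**}$'' is the correct hypothesis to feed into Theorem \ref{princ}, and that the resulting Fr\'echet derivative $i^*(Q)\in E^*$ is indeed the derivative of $g_{|E}$ in the intended sense. I expect this to be the main (and essentially only) obstacle, and it is routine.

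For the ``in particular'' clause I would take $g=(f^*)^*$. This is convex and weak$^*$ lower semicontinuous on $E^{**}$, being a supremum of weak$^*$-continuous affine functions, hence a fortiori lower semicontinuous, so the first part applies: if $x\in E$ is interior to $dom((f^*)^*)$ and $(f^*)^*$ is G\^ateaux differentiable at $x$, then $((f^*)^*)_{|E}$ is Fr\'echet differentiable at $x$. Finally, since $f$ is convex and lower semicontinuous on $E$, Fenchel's theorem (recalled above) gives $f=f^{**}$, i.e. $((f^*)^*)_{|E}=f$, so $f$ itself is Fr\'echet differentiable at $x$. No further work is needed; the corollary is simply Theorem \ref{princ} applied to the embedding $i:E\longrightarrow E^{**}$, which is limited for $E=c_0$ or any closed subspace thereof by \cite{CGL}.
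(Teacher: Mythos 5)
Your proposal is correct and matches the paper's intended argument exactly: the paper offers no separate proof, stating only that Theorem \ref{princ} "gives immediately" the corollary via $Y=E$, $X=E^{**}$, $T=i$, and your handling of the identification $g\circ i=g_{|E}$ and of the ``in particular'' clause via $f=f^{**}$ (Fenchel's theorem) is precisely the routine bookkeeping the author leaves implicit.
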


We obtain the following corollary by combining Proposition \ref{Za2} and a delicate result due to Zajicek (see [Theorem 2; \cite{Za}]), which say that in a separable Banach space, the set of the points where a convex continuous function is not G\^ateaux differentiable, can be covered by countably many $d.c$ (that is, delta-convex) $hypersurface$. Recall that in a separable Banach space $Y$, each set $A$ which can be covered by countably many $d.c$ $hypersurface$ is $\sigma$-lower porous, also $\sigma$-directionally porous; in particular it is both Aronszajn (equivalent to Gauss) null and $\Gamma$-null. For details about this notions  of small sets we refer to \cite{Za1} and references therein. Note that a limited set in a separable Banach space is relatively compact \cite{BD}.

\begin{Prop} \label{Za2} Let $Y$ and $X$ be Banach spaces and $T: Y \longrightarrow X$ be a limited operator with a dense range. Let $f : X \longrightarrow \R$ be a convex continuous function. Then $f\circ T$ is G\^ateaux differentiable at $a\in Y$ if and only if, $f\circ T$ is Fr\'echet differentiable at $a\in Y$. 
\end{Prop}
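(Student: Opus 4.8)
The plan is to reduce Proposition \ref{Za2} to Theorem \ref{princ}. The ``only if'' direction is immediate, since Fr\'echet differentiability always implies G\^ateaux differentiability. For the ``if'' direction, I assume $f\circ T$ is G\^ateaux differentiable at $a$ and show that $f$ itself is G\^ateaux differentiable at $T(a)$; then Theorem \ref{princ}, applied with the convex lower semicontinuous function $f$ (for which $dom(f)=X$, so that $T(a)$ is automatically in $\inte(dom(f))$), yields that $f\circ T$ is Fr\'echet differentiable at $a$.

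Here are the steps. First, since $T$ has dense range, its adjoint $T^*:X^*\longrightarrow Y^*$ is injective: if $T^*p=0$ then $\langle p,T(y)\rangle=0$ for every $y\in Y$, so $p$ vanishes on $\overline{T(Y)}=X$, hence $p=0$. Second, because $f$ is convex and continuous on $X$, the subdifferential $\partial f(T(a))$ is nonempty, and for every $Q\in\partial f(T(a))$ the subgradient inequality $f(x)\ge f(T(a))+\langle Q,x-T(a)\rangle$, evaluated at $x=T(y)$, gives $f\circ T(y)\ge f\circ T(a)+\langle T^*Q,y-a\rangle$ for all $y\in Y$; that is, $T^*Q\in\partial(f\circ T)(a)$. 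Third, since $f\circ T$ is convex and G\^ateaux differentiable at $a$, writing $p_0:=\nabla(f\circ T)(a)$ one has $\partial(f\circ T)(a)=\{p_0\}$ (indeed $p_0\in\partial(f\circ T)(a)$ by convexity, and any $p$ in this subdifferential satisfies $\langle p,v\rangle\le(f\circ T)'(a,v)=\langle p_0,v\rangle$ for all $v$, whence $p=p_0$). Combining the last two points, $T^*(\partial f(T(a)))\subseteq\{p_0\}$; since $\partial f(T(a))$ is nonempty and $T^*$ is injective, $\partial f(T(a))$ is a single point $\{Q\}$, with $T^*Q=p_0$. As $f$ is continuous, a singleton subdifferential forces G\^ateaux differentiability (see e.g. \cite{Ph}), so $f$ is G\^ateaux differentiable at $T(a)$ with $\nabla f(T(a))=Q$. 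Now Theorem \ref{princ} applies and gives the conclusion, namely that $f\circ T$ is Fr\'echet differentiable at $a$ (with derivative $T^*Q=p_0$).

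I do not expect a real obstacle; this is a routine reduction. The only slightly delicate points are the use of the dense-range hypothesis precisely to obtain injectivity of $T^*$ (so that a one-point image set pulls back to a one-point set), and the standard fact that a finite continuous convex function with a singleton subdifferential at a point is G\^ateaux differentiable there. Note that, as elsewhere in the paper, one could equally phrase the middle step via the subdifferential chain rule $\partial(f\circ T)(a)=T^*(\partial f(T(a)))$, valid because $f$ is continuous, but only the trivial inclusion $T^*(\partial f(T(a)))\subseteq\partial(f\circ T)(a)$ is actually needed.
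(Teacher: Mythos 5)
Your proof is correct, and it follows the same overall reduction as the paper's: both arguments show that G\^ateaux differentiability of $f\circ T$ at $a$ forces G\^ateaux differentiability of $f$ at $T(a)$, and then invoke Theorem \ref{princ} (with $dom(f)=X$, so the interiority hypothesis is automatic). Where you differ is in how that intermediate step is established. The paper observes that G\^ateaux differentiability of $f\circ T$ at $a$ gives differentiability of $f$ at $T(a)$ along every direction in the dense subspace $T(Y)$, and then appeals to the classical fact that for a locally Lipschitz convex function the directional derivative $v\mapsto f'(T(a);v)$ is a Lipschitz sublinear function of $v$, hence is determined by its values on a dense set, so linearity on $T(Y)$ propagates to all of $X$. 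You instead run the argument through the subdifferential: dense range gives injectivity of $T^*$, the inclusion $T^*(\partial f(T(a)))\subseteq \partial(f\circ T)(a)=\{p_0\}$ together with nonemptiness of $\partial f(T(a))$ then forces $\partial f(T(a))$ to be a singleton, and a continuous convex function with a singleton subdifferential is G\^ateaux differentiable. The two uses of the dense-range hypothesis are essential in both proofs and are dual to each other (density of $T(Y)$ in $X$ versus injectivity of $T^*$). Your version has the small advantage of being self-contained modulo standard subdifferential facts from \cite{Ph} and of recording the derivative $T^*Q=p_0$ explicitly; the paper's is shorter but leans on an unreferenced ``classical fact'' about locally Lipschitz functions.
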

\begin{proof} Suppose that $f\circ T$ is G\^ateaux differentiable at $a\in Y$. It follows that $f$ is G\^ateaux differentiable at $T(a)$ with respect to the direction $T(Y)$ which is dense in $X$. It follows (from a classical fact on locally Lipschitz continuous functions) that $f$ is G\^ateaux differentiable at $T(a)$ on $X$. So by Theorem \ref{princ}, $f\circ T$ is Fr\'echet differentiable at $a\in Y$. The converse is always true.

\end{proof}

\begin{Cor} \label{Za}  Let $Y$ be a separable Banach space, $X$ be a Banach spaces and $T: Y \longrightarrow X$ be a compact operator with a dense range. Let $f: X \longrightarrow \R$, be a convex and continuous function. Then, the set of all points at which $f\circ T$ is not Fr\'echet differentiable can be covered by countably many $d.c$ $hypersurface$.
\end{Cor}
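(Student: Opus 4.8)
The plan is to reduce the statement to Zaj\'i\v{c}ek's theorem through Proposition \ref{Za2}. First I would observe that a compact operator $T: Y \longrightarrow X$ is automatically a limited operator: by compactness $T(B_Y)$ is relatively compact in $X$, and every relatively compact subset of a Banach space is limited (as recalled in the introduction), so $T$ is limited by definition. Since $T$ is moreover assumed to have dense range, both hypotheses of Proposition \ref{Za2} are met for the given convex continuous function $f$.

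Next I would record the elementary fact that $f \circ T : Y \longrightarrow \R$ is a convex continuous real-valued function on the separable Banach space $Y$: it is convex as the composition of the convex function $f$ with the linear map $T$, and continuous as the composition of two continuous maps. By Proposition \ref{Za2}, for every point $a \in Y$ the function $f \circ T$ is G\^ateaux differentiable at $a$ if and only if it is Fr\'echet differentiable at $a$. Hence the set $N_F$ of points at which $f \circ T$ fails to be Fr\'echet differentiable coincides with the set $N_G$ of points at which $f \circ T$ fails to be G\^ateaux differentiable.

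Finally I would invoke Zaj\'i\v{c}ek's theorem ([Theorem 2; \cite{Za}]): in a separable Banach space, the set of points at which a convex continuous function is not G\^ateaux differentiable can be covered by countably many $d.c$ hypersurfaces. Applying this to the convex continuous function $f \circ T$ on the separable space $Y$ shows that $N_G$ is covered by countably many $d.c$ hypersurfaces, and since $N_F = N_G$ the same covering works for $N_F$, which is precisely the asserted conclusion.

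There is essentially no hard step here; the only points requiring care are the verification of the hypotheses of Proposition \ref{Za2} (limitedness of a compact operator, and density of its range) and of Zaj\'i\v{c}ek's theorem (separability of $Y$, and the fact that $f \circ T$ is genuinely convex, continuous and real-valued). As noted in the paragraph preceding the statement, in this separable setting the covering by $d.c$ hypersurfaces in fact yields the stronger conclusions that $N_F$ is $\sigma$-directionally porous, Aronszajn (equivalently Gauss) null, and $\Gamma$-null.
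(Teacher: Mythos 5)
Your proposal is correct and is exactly the argument the paper intends: the corollary is obtained by noting that a compact operator is limited (since relatively compact sets are limited), applying Proposition \ref{Za2} to identify the non-Fr\'echet points of $f\circ T$ with its non-G\^ateaux points, and then invoking Zaj\'i\v{c}ek's theorem on the separable space $Y$. No gaps.
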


\bibliographystyle{amsplain}

\end{document}